\newenvironment{enumerateroman}{\begin{enumerate}[\textup{(}i\textup{)}] }{\end{enumerate}}
\newenvironment{proof*}[1]{\noindent\textit{#1.}}{\hspace*{\fill}$\Box$\medskip}
\newtheorem{lemma}{Lemma}
\newtheorem{theorem}{Theorem}
\newtheorem{remark}{Remark}
\newtheorem{thmA}{Theorem}
\newtheorem*{conj}{Chern Conjecture}
\newtheorem*{conjg}{Generalized Chern Conjecture}
\newtheorem*{problem}{Open Problem}
\newcommand{\sumik}{\underset{k \geq 2}{\sum_i}}
\begin{document}

\title[A new gap for hypersurfaces in space forms]{A new gap for complete hypersurfaces with constant mean curvature in space forms}
\thanks{2010 Mathematics Subject
Classification. 53C24; 53C40.
\newline \indent Keywords: Complete hypersurface, gap theorem, mean curvature, scalar curvature\newline\indent Research supported by the National Natural Science Foundation of China, Grant Nos. 11531012, 11371315, 11771394; and the China Postdoctoral Science Foundation, Grant No. BX20180274.}
\author{Juanru Gu}

\author{Li Lei}

\author{Hongwei Xu}

\begin{abstract}
  Let $M$ be an $n$-dimensional closed hypersurface with
  constant mean curvature and constant scalar curvature in an unit sphere.
  Denote by $H$ and $S$ the mean curvature and the squared length of the
  second fundamental form respectively. We prove that if $S > \alpha (n, H)$, where $n\geq 4$ and $H\neq 0$,
  then $S > \alpha (n, H) + B_n\frac{n H^2}{n - 1}$. Here \[ \alpha (n, H) = n + \frac{n^3}{2 (n - 1)} H^2 - \frac{n (n - 2)}{2 (n - 1)}
   \sqrt{n^2 H^4 + 4 (n - 1) H^2}, \]  $B_n=\frac{1}{5}$ for $4\leq n \leq 20$, and
  $B_n=\frac{49}{250}$  for $n>20$. Moreover, we
  obtain a gap theorem for complete hypersurfaces with
  constant mean curvature and constant scalar curvature in space forms.
\end{abstract}

{\maketitle}

\section{Introduction}

In the late 1960's, Simons \cite{Simons1968}, Lawson \cite{Lawson1969} and Chern-do
Carmo-Kobayashi \cite{CdK1970} proved that if $M$ is a closed minimal
hypersurface in $\mathbb{S}^{n + 1}$, whose squared length of the second
fundamental form satisfies $S \leq n$, then $S \equiv 0$ and $M$ is the
great sphere $\mathbb{S}^n$, or $S \equiv n$ and $M$ is the Clifford torus.
Moreover, they obtained a rigidity theorem for $n$-dimensional closed minimal
submanifolds in $\mathbb{S}^{n + p}$ under the pinching condition $S \leq
\frac{n}{2 - 1 / p}$. Afterwards, Li-Li \cite{LL1992} improved Simons' pinching constant for
closed minimal submanifolds to $\max \{ \frac{n}{2 - 1 / p}, \frac{2
n}{3} \}$. For minimal submanifolds in spheres, constant
scalar curvature implies constant length of second fundamental form.
In 1970's, Chern proposed the following famous conjecture
\cite{Chern1968,CdK1970}, which was listed in the well-known problem section by Yau
\cite{Yau1982}.

\begin{conj}
  Let $M$ be a closed
minimal hypersurface with constant scalar curvature in the unit sphere
$\mathbb{S}^{n + 1}$. Then the set of all possible values of the
scalar curvature of $M$ is discrete.
\end{conj}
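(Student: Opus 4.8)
The plan is to reduce the conjecture to a pointwise and integral analysis of the second fundamental form $A$, and then to establish discreteness one gap at a time. Since $M$ is minimal in $\mathbb{S}^{n+1}$, the Gauss equation expresses the scalar curvature as $R = n(n-1) - S$, where $S = |A|^2$ is the squared length of the second fundamental form; hence, as recorded in the introduction, for hypersurfaces constant scalar curvature is equivalent to $S$ being constant, and the conjecture is equivalent to the assertion that the set of admissible constant values of $S$ has no accumulation point.

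First I would invoke the Simons identity for minimal hypersurfaces,
\[
  \frac{1}{2}\Delta S = |\nabla A|^2 + S(n - S).
\]
Because $S$ is constant the left-hand side vanishes, yielding the pointwise relation $|\nabla A|^2 = S(S - n)$. This already forces $S = 0$ or $S \ge n$, recovering the first gap $(0,n)$ of \cite{Simons1968,Lawson1969,CdK1970}, with the boundary case $S = n$ realized rigidly by the Clifford torus. The discreteness conjecture asks for the analogous phenomenon to persist above $n$: each admissible value should be isolated.

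To climb past $S = n$ I would pass to the integral method in the spirit of Peng--Terng. Differentiating the Simons identity produces Bochner-type formulas for the higher symmetric invariants $f_3 = \mathrm{tr}\,A^3$ and $f_4 = \mathrm{tr}\,A^4$ and for $|\nabla A|^2$ and $|\nabla^2 A|^2$. Multiplying these by suitable powers of $S$ and integrating over the closed manifold $M$, so that every divergence term drops out, converts the pointwise constancy of $S$ into integral inequalities tying $\int_M |\nabla A|^2$ and $\int_M |\nabla^2 A|^2$ to the algebraic invariants; here one controls $f_3$ in terms of $S$ through the sharp trace inequality underlying the Li--Li estimate \cite{LL1992}. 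The goal is to show that on each bounded window $n \le S \le C$ these inequalities are incompatible unless $S$ equals one of finitely many quantized values---precisely those realized by the minimal isoparametric hypersurfaces---so that a gap opens above every admissible value.

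The hard part, and the reason the conjecture remains famously open, is to make these estimates uniform as $S \to \infty$ and so exclude an accumulation point of admissible values. Two difficulties reinforce one another: the algebraic control of $f_3$ and $f_4$ by $S$ degrades as the number of distinct principal curvatures grows, so the dimension-dependent constants in the integral inequalities deteriorate; and the scheme closes one interval at a time rather than furnishing a single argument valid for all large $S$. I would attempt to address the first through a refined analysis of the eigenvalue distribution of the shape operator under the constraint $\nabla S = 0$, and the second by seeking a monotone quantity indexed along the admissible spectrum of $S$. I do not expect any elementary choice of test function to suffice; a complete resolution most plausibly requires new structural input---either the classification asserting that every minimal hypersurface of constant scalar curvature is isoparametric (the Generalized Chern Conjecture), or an a priori constraint forcing the principal curvatures into a discrete set of configurations.
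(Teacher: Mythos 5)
There is a genuine gap here, and it is unavoidable: the statement you were asked to prove is the Chern Conjecture itself, which the paper states as an open problem and does not prove. The paper's actual contribution (Theorems \ref{Thm1} and \ref{Thm2}) is a new \emph{gap theorem} --- showing that above $\alpha(n,H)$ the value of $S$ jumps by at least $B_n\frac{nH^2}{n-1}$ for constant mean curvature hypersurfaces with $H \neq 0$ --- which is partial progress toward discreteness, not a proof of it. So there is no ``paper's own proof'' to compare against, and your proposal, to its credit, does not pretend otherwise: your final paragraph explicitly concedes that the scheme cannot be closed without new structural input. As a proof, then, the proposal fails at exactly the point you identify. What you do establish is sound but classical: by the Gauss equation $R = n(n-1) - S$ for a minimal hypersurface, constant scalar curvature is equivalent to constant $S$, and then the Simons identity with $\Delta S = 0$ gives $|\nabla A|^2 = S(S-n) \geq 0$ pointwise, forcing $S = 0$ or $S \geq n$, which is the first gap of \cite{Simons1968,Lawson1969,CdK1970}. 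Everything beyond that is a program rather than an argument: no test functions are specified, no integral inequality is actually derived, and the Peng--Terng method \cite{PT1983a} as you describe it is known to yield only one explicit gap above $n$ (of width later improved to $\frac{n}{3}$ and $\frac{3n}{7}$), not isolation of every admissible value, let alone uniformity as $S \to \infty$.

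Two further cautions. First, the ``new structural input'' you invoke --- that every closed minimal hypersurface of constant scalar curvature in a sphere is isoparametric --- is a strengthening of the very conjecture at issue (it implies discreteness only via M\"unzner's result \cite{Munzner1980} that $g \in \{1,2,3,4,6\}$ and $S = (g-1)n$), so appealing to it is circular as a proof strategy; it has been verified only in low dimensions, e.g.\ $n = 3$ by Chang \cite{Chang1993}. Second, a terminological slip: in this paper the name ``Generalized Chern Conjecture'' refers to the constant-mean-curvature analogue of the discreteness statement, not to the isoparametric classification assertion you attach it to. If you want to see what current technique can actually extract in this direction, the paper's proof of Theorem \ref{Thm2} is instructive: it converts constancy of $S$ into control of the normalized invariant $\phi$, uses the algebraic Lemmas \ref{mui} and \ref{phi<1/16sqrt} to force the smallest principal curvature to be simple, applies the Laplacian formula for a simple principal curvature, and concludes via the Omori--Yau maximum principle --- a mechanism that produces one quantitative gap, which is precisely the gulf between what is provable today and the full conjecture.
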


Based on the fact that isoparametric hypersurfaces are the only known examples
of closed minimal hypersurfaces with constant scalar curvature in a sphere,
and the scalar curvatures of these isoparametric hypersurfaces have
only finite number of values, mathematicians conjectured that closed minimal
hypersurfaces with constant scalar curvature in spheres must be isoparametric.
In \cite{Munzner1980}, M\"unzner showed that for a closed
isoparametric minimal hypersurface in an unit sphere, the number
of distinct principal curvatures satisfies
$g\in\{1,\,2,\,3,\,4,\,6\}$, and $S=(g-1)n$.

During the last three decades, there have been many important progresses on
Chern Conjecture (see \cite{Cheng1997,GT2012,GXXZ2016,SWY} for more details).
In 1983, Peng and Terng \cite{PT1983a} made a breakthrough on the Chern conjecture.
Peng-Terng proved that if $M^n$ is a closed minimal hypersurface with constant
scalar curvature in $\mathbb{S}^{n + 1}$, and if $S > n$, then $S > n +
\frac{1}{12 n}$. In particular, for the case $n = 3$, they verified that if $S
> 3$, then $S \geq 6$.
Afterwards, Yang-Cheng \cite{YC1990,YC1994,YC1998}
improved the pinching constant $\frac{1}{12 n}$ to $\frac{n}{3}$, Suh-Yang
\cite{SY2007} improved it to $\frac{3 n}{7}$. In 1993, Chang \cite{Chang1993} solved
Chern conjecture in dimension three.
Recently, Deng-Gu-Wei \cite{DGW2017} proved that any closed Willmore minimal hypersurface with
constant scalar curvature in $\mathbb{S}^5$ must be isoparametric.
For closed minimal hypersurfaces in
spheres, the scalar curvature pinching phenomenon without the assumption of
constant scalar curvature was investigated by many authors
\cite{CI1999,DX2011,LXX2017,PT1983b,WX2007,XX2017,Zhang2010}.

Let $M$ be an $n$-dimensional closed submanifold in a sphere. Denote by $H$ and
$S$ its mean curvature and the squared norm of its second fundamental form
respectively. For hypersurfaces with constant mean curvature, we have the
following generalized version of Chern's conjecture.

\begin{conjg}
  Let $M$ be a
closed hypersurface with constant mean curvature and constant scalar curvature
in the unit sphere $\mathbb{S}^{n + 1}$. Then for each $n$ and $H$, the set of
all possible values of $S$ is discrete.
\end{conjg}

We set
\[ \alpha (n, H) = n + \frac{n^3}{2 (n - 1)} H^2 - \frac{n (n - 2)}{2 (n - 1)}
   \sqrt{n^2 H^4 + 4 (n - 1) H^2} . \]
Cheng-Nakagawa \cite{CN1990} and Xu \cite{Xu1990} got the following rigidity theorem for closed hypersurfaces with constant mean curvature in a sphere.

\begin{thmA}
  Let $M$ be an $n$-dimensional closed hypersurface
with constant mean curvature in the unit sphere $\mathbb{S}^{n + 1}$. If the
squared length of the second fundamental form satisfies $S \leq \alpha (n, H)$,
then $M$ is either a totally umbilical sphere, or a Clifford torus.
\end{thmA}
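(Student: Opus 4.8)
The plan is to reduce everything to the trace-free second fundamental form, derive a Simons-type identity for $S$, convert it into a differential inequality by a purely algebraic estimate, and then run the maximum principle on the closed manifold, reading off the two model hypersurfaces from the equality discussion. Write $H=\frac1n\operatorname{tr}A$ for the normalized mean curvature, let $\phi_{ij}=h_{ij}-H\delta_{ij}$ be the trace-free part of the second fundamental form, and set $\Phi=|\phi|^2=S-nH^2\ge0$. Since $H$ is constant, $\Delta S=\Delta\Phi$ and $|\nabla A|^2=|\nabla\phi|^2$.

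First I would establish the Simons-type formula. Starting from the Codazzi equations and the Ricci identity in the ambient space of constant curvature $1$, Simons' computation for hypersurfaces gives, after discarding the $\nabla H$-terms (here $H$ is constant),
\[
\tfrac12\Delta\Phi=|\nabla\phi|^2+nH\operatorname{tr}(\phi^3)+\Phi\bigl(n+nH^2-\Phi\bigr).
\]
The totally umbilical case $\phi\equiv0$ is a consistency check, since both sides then vanish. The next step is the algebraic heart of the argument: controlling the cubic term. By Okumura's lemma, for any symmetric trace-free tensor one has $|\operatorname{tr}(\phi^3)|\le\frac{n-2}{\sqrt{n(n-1)}}\Phi^{3/2}$, with equality exactly when $\phi$ has at most two distinct eigenvalues, one of them simple. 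Feeding this into the identity yields
\[
\tfrac12\Delta\Phi\ \ge\ |\nabla\phi|^2+\Phi\,Q_H\!\left(\sqrt{\Phi}\right),\qquad Q_H(x)=n+nH^2-\frac{n(n-2)|H|}{\sqrt{n(n-1)}}\,x-x^2 .
\]

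A direct computation identifies the unique positive root $x_0$ of $Q_H$ through $x_0^2=\alpha(n,H)-nH^2$: expanding $\bigl(\tfrac12(-a+\sqrt{a^2+4b})\bigr)^2$ with $a=\frac{n(n-2)|H|}{\sqrt{n(n-1)}}$ and $b=n(1+H^2)$ reproduces exactly the stated value of $\alpha(n,H)$. Consequently the hypothesis $S\le\alpha(n,H)$ is equivalent to $\Phi\le x_0^2$, i.e. to $Q_H(\sqrt{\Phi})\ge0$, so the right-hand side above is nonnegative and $\Delta\Phi\ge0$ everywhere. Since $M$ is closed, $\int_M\Delta\Phi=0$ forces $\Delta\Phi\equiv0$; hence $|\nabla\phi|\equiv0$ (so $\phi$ is parallel and $\Phi$ is constant) while $\Phi\,Q_H(\sqrt{\Phi})\equiv0$ together with equality in Okumura's lemma. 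Either $\Phi\equiv0$, giving a totally umbilical sphere, or $\Phi\equiv x_0^2>0$, in which case $\phi$ is parallel with exactly two distinct constant principal curvatures, of multiplicities $n-1$ and $1$; then $M$ is isoparametric with two distinct principal curvatures and is identified as a Clifford (H-)torus.

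I expect the main obstacle to be the equality case rather than the inequality. Turning ``$\phi$ parallel, two distinct constant principal curvatures, Okumura equality'' into the precise statement that $M$ is the Clifford torus requires the structure theory of hypersurfaces with parallel second fundamental form (or a direct integration of the parallel-$\phi$ condition), and one must check that the radii are pinned down by the prescribed $H$ and $S=\alpha(n,H)$. A secondary technical point is the bookkeeping of constants in the Simons identity so that the positive root of $Q_H$ matches $\alpha(n,H)$ on the nose; the verification above shows the algebra closes, but it is the step most prone to sign and normalization errors.
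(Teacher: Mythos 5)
Your proposal cannot be checked against an internal argument, because the paper never proves Theorem A: it is imported as a known result of Cheng--Nakagawa \cite{CN1990} and Xu \cite{Xu1990}, and the paper's own contribution (Theorems \ref{Thm1} and \ref{Thm2}) takes it as a starting point. Judged on its own, your proof is correct in substance and is essentially the classical argument of those references. Your trace-free Simons identity is the paper's formula \eqref{LapS} with $c=1$ rewritten in terms of $\Phi=S-nH^2$ (the bookkeeping checks out: $S(n-S)-n^2H^2+nH\operatorname{tr}A^3=\Phi(n+nH^2-\Phi)+nH\operatorname{tr}(\phi^3)$), the cubic term is handled by Okumura's inequality, and your identification of the positive root of $Q_H$ via $x_0^2=\alpha(n,H)-nH^2$ is precisely the content of the paper's Lemma \ref{idealp} at $c=1$, which says $n(H^2+1)-\mathring{\alpha}=(n-2)\sqrt{\tfrac{n}{n-1}H^2\mathring{\alpha}}$, i.e.\ $Q_H\bigl(\sqrt{\mathring{\alpha}}\bigr)=0$. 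Integrating $\Delta\Phi\ge 0$ over the closed manifold and forcing equality throughout is the standard and valid conclusion; note that the paper's Theorems \ref{Thm1} and \ref{Thm2} cannot proceed this way (they concern the region $S>\alpha(n,H)$, where the Simons inequality has the wrong sign), which is why the paper instead works with the smallest principal curvature and the Omori--Yau principle.

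Two points in your equality discussion deserve care. First, your claim that the two eigenvalue blocks have multiplicities $n-1$ and $1$ comes from equality in Okumura's lemma, which your chain of inequalities forces only when $H\neq 0$; when $H=0$ (which the theorem allows) the cubic term carries no information, all minimal Clifford tori $\mathbb{S}^k\times\mathbb{S}^{n-k}$ occur, and the multiplicity claim is false in general --- though the conclusion ``Clifford torus'' survives, via the parallel-$\phi$ structure theory rather than via Okumura equality. Second, as you yourself flag, passing from ``$\nabla\phi\equiv 0$ with two distinct constant principal curvatures'' to the explicit product of spheres is not free: one must invoke the rigidity of hypersurfaces with parallel second fundamental form (Lawson \cite{Lawson1969}) or Cartan's classification of isoparametric hypersurfaces with two principal curvatures, together with the observation that $H$ and $S=\alpha(n,H)$ determine the radii. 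With those ingredients cited, the proof is complete.
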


More generally, the third author \cite{Xu1990,Xu1993a} proved the generalized
Simons-Lawson-Chern-do Carmo-Kobayashi theorem for compact submanifolds with
parallel mean curvature in a sphere.

In 1990, de Almeida and Brito \cite{dB1990} proved that closed 3-dimensional hypersurfaces with constant mean curvature and nonnegative constant scalar curvature in a space form must be isoparametric.
In \cite{Chang1993b}, Chang  proved that if $M$ is a closed hypersurface with constant mean curvature and constant scalar curvature in the unit sphere $\mathbb{S}^4$, then $M$ is isoparametric.
Chang \cite{Chang1994} also proved that a closed hypersurface with constant mean curvature, constant scalar curvature and three distinct principal curvatures in a sphere must be isoparametric.
Later, Cheng and Wan \cite{CW1994} classified complete hypersurfaces with constant
mean curvature and constant scalar curvature in $\mathbb{R}^4$. In 2017, N$\acute{u}\tilde{n}$ez \cite{Nunez} investigated the classification problem for closed hypersurfaces with constant mean curvature and constant scalar curvature in $\mathbb{R}^5$.
Recently, Tang-Wei-Yan \cite{TWY2018} generalized the theorem of de Almeida and Brito \cite{dB1990} to higher dimensional cases.

Put $$\beta(n,H)=n+\frac{n^3}{2(n-1)}H^2+\frac{n(n-2)}{2(n-1)}\sqrt{n^2H^4+4(n-1)H^2}.$$
Xu-Tian \cite{XT2011} proved the following second gap theorem for hypersurfaces with constant mean curvature and constant scalar curvature in a sphere, which generalized Suh-Yang's second gap theorem for minimal hypersurfaces.
\begin{thmA} Let $M$ be a
compact hypersurface with constant mean curvature and constant scalar curvature in the unit sphere $\mathbb{S}^{n+1}$.  There exists a
positive constant $\gamma(n)$ depending only on $n$ such that if
$|H|<\gamma(n)$, and $\beta(n,H)\leq
S\leq\beta(n,H)+\frac{3n}{7}$, where $n\geq 4$ and $H\neq 0$, then $S\equiv\beta(n,H)$ and
$M=\mathbb{S}^{1}(\frac{1}{\sqrt{1+\mu^2}})\times
\mathbb{S}^{n-1}(\frac{\mu}{\sqrt{1+\mu^2}})$. Here
 $\mu=\frac{n|H|+\sqrt{n^2H^2+4(n-1)}}{2}$.\end{thmA}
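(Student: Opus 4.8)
The plan is to exploit that $M$ is closed with both $H$ and the scalar curvature constant, so that by the Gauss equation $S = n(n-1) + n^2 H^2 - R$ is itself constant; passing to the trace-free second fundamental form $\phi_{ij} = h_{ij} - H\delta_{ij}$, the quantity $\Phi := S - nH^2 = |\phi|^2 \ge 0$ is then a nonnegative constant. The first step is to record the Simons-type identity for a constant-mean-curvature hypersurface in $\mathbb{S}^{n+1}$, which takes the schematic form $\tfrac12\Delta\Phi = |\nabla\phi|^2 + \Phi(n - \Phi) + nH\,\mathrm{tr}(\phi^3)$. Since $\Phi$ is constant the left side vanishes, yielding the pointwise relation $|\nabla\phi|^2 = \Phi(\Phi - n) - nH\,\mathrm{tr}(\phi^3)$, which already encodes that for $S$ near the critical value $\beta(n,H)$ the tensor $\phi$ is ``almost parallel''.

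Next I would run the Peng--Terng higher-order machinery adapted to the CMC setting. Using Stokes' theorem on the closed manifold $M$, one integrates the Laplacians of the power sums $f_3 = \mathrm{tr}(\phi^3)$ and $f_4 = \mathrm{tr}(\phi^4)$ together with $|\nabla\phi|^2$ to produce a system of integral identities relating $\int_M |\nabla^2\phi|^2$, $\int_M \Phi|\nabla\phi|^2$, and $\int_M |\nabla\phi|^2$. The indispensable algebraic inputs are Okumura's inequality $|\mathrm{tr}(\phi^3)| \le \frac{n-2}{\sqrt{n(n-1)}}\,\Phi^{3/2}$ and a sharpened second-pinching estimate controlling the quartic term $\mathrm{tr}(\phi^4)$ and the gradient term $\sum_{i,j,k}\phi_{ij,k}^2$ against $\Phi$ and $|\nabla\phi|^2$; these are exactly the places where the hypothesis $n\ge 4$ enters.

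With these identities in hand, the second-gap argument linearizes around the model torus. For $\mathbb{S}^1 \times \mathbb{S}^{n-1}$ the eigenvalues of $\phi$ follow the pattern $\{(n-1)a, -a, \dots, -a\}$, and $\Phi$ attains the value corresponding to $S = \beta(n,H)$; I would feed the deficit $S - \beta(n,H)$ into the integral system to obtain a differential-integral inequality of the form $0 \le \big(\tfrac{3n}{7} - (S - \beta(n,H))\big)\int_M |\nabla\phi|^2 - (\text{positive }H\text{-controlled remainder})$. When $\beta(n,H) \le S \le \beta(n,H) + \tfrac{3n}{7}$ and $|H| < \gamma(n)$ this forces $\int_M|\nabla\phi|^2 = 0$, i.e.\ $\nabla h \equiv 0$, so $M$ is isoparametric; the constancy of $S$ then pins the number of distinct principal curvatures to two, and matching the value $S = \beta(n,H)$ identifies $M$ with $\mathbb{S}^1(\tfrac{1}{\sqrt{1+\mu^2}}) \times \mathbb{S}^{n-1}(\tfrac{\mu}{\sqrt{1+\mu^2}})$.

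The main obstacle is precisely the quantitative heart of the last step: obtaining the sharp coefficient $\tfrac{3n}{7}$ and an effective threshold $\gamma(n)$ simultaneously. This requires tracking every $H$-dependent cross term through the integral identities and bounding it uniformly, so that for $|H|$ small these corrections are absorbed into the dominant minimal-type estimate, whose sharp constant is $\tfrac{3n}{7}$ (the Suh--Yang constant). Establishing the refined algebraic inequality for the second pinching with the correct constant, and verifying that the $H$-corrections do not degrade it, is the delicate part; the reduction to the minimal case is valid only in the regime $|H| < \gamma(n)$, which is why the theorem carries a smallness assumption on $H$.
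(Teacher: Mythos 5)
First, a point of reference: the paper you are working from does not prove this statement at all. It appears as Theorem B, quoted from Xu--Tian \cite{XT2011}, and the paper's own arguments (Theorems \ref{Thm1} and \ref{Thm2}) concern the gap above $\alpha(n,H)$, proved by a pointwise maximum-principle argument applied to the smallest principal curvature --- not by the integral method you describe. So the only meaningful comparison is between your outline and the proof in \cite{XT2011}, whose overall strategy you have identified correctly: it is the Peng--Terng/Suh--Yang program (integral identities obtained by integrating Laplacians of the power sums $\mathrm{tr}(\phi^3)$, $\mathrm{tr}(\phi^4)$, an estimate of $\int_M|\nabla^2\phi|^2$ against $\int_M|\nabla\phi|^2$ and $\int_M \Phi|\nabla\phi|^2$, plus Okumura-type algebraic bounds), with every $H$-dependent term treated perturbatively under the smallness assumption $|H|<\gamma(n)$.

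The genuine gap is that your proposal defers exactly the content of the theorem. Everything you describe before your ``main obstacle'' paragraph was already available in the minimal case (Peng--Terng 1983, Suh--Yang 2007); the theorem's entire substance is the quantitative step you explicitly do not carry out: the refined second-order estimate with constants sharp enough to produce the coefficient $\tfrac{3n}{7}$, and the uniform absorption of the $H$-cross-terms, which is what yields an effective $\gamma(n)$. Without that, no inequality forces $\int_M|\nabla\phi|^2=0$, and nothing is proved. Two further defects: (a) your concluding inequality has the wrong logical orientation --- as written, $0\le\bigl(\tfrac{3n}{7}-(S-\beta(n,H))\bigr)\int_M|\nabla\phi|^2-(\text{positive remainder})$ does not force $\int_M|\nabla\phi|^2=0$; what one must derive is an inequality of the form $c(n,H)\int_M|\nabla\phi|^2\le 0$ with $c(n,H)>0$ throughout the pinching window, or a contradiction from the assumption $S>\beta(n,H)$; (b) the endgame ``the constancy of $S$ then pins the number of distinct principal curvatures to two'' is false as stated: $\nabla h\equiv 0$ only makes $M$ isoparametric, and one must then rule out $g=1$ (there $S=nH^2<\beta(n,H)$), rule out $g\in\{3,4,6\}$ (there $S\ge 2n+O(|H|)>\beta(n,H)+\tfrac{3n}{7}$ once $|H|$ is small), and observe that among the $g=2$ tori only the one with $S=\beta(n,H)$, namely $\mathbb{S}^{1}\bigl(\tfrac{1}{\sqrt{1+\mu^2}}\bigr)\times\mathbb{S}^{n-1}\bigl(\tfrac{\mu}{\sqrt{1+\mu^2}}\bigr)$, lies in the window $[\beta(n,H),\beta(n,H)+\tfrac{3n}{7}]$. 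This comparison of $S$-values across the isoparametric classification, using the smallness of $H$, is an essential step your sketch skips.
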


For hypersurfaces with
small constant mean curvature in a sphere, there have been several scalar curvature pinching theorems
\cite{GXXZ2016,XX2013,XX2014}.

When $H\neq 0$, we set
 $$\alpha_k(n,H)=n+\frac{n^3}{2k(n-k)}H^2-\frac{n(n-2k)}{2k(n-k)}\sqrt{n^2H^4+4k(n-k)H^2},$$
for $k=1,\cdots, n-1$. Then we have
 \begin{eqnarray*}&&\alpha(n,H)=\alpha_1(n,H)<\cdots<\alpha_{n-1}(n,H)=\beta(n,H),\end{eqnarray*}
and the isoparametric hypersurfaces $\mathbb{S}^{n-k}(\frac{1}{\sqrt{1+\lambda_k^2}})\times \mathbb{S}^{k}(\frac{\lambda_k}{\sqrt{1+\lambda_k^2}})$ satisfy $$S\equiv\alpha_{k}(n,H),$$ where $\lambda_k=\frac{n|H|+\sqrt{n^2H^2+4k(n-k)}}{2(n-k)}$ for $k=1,2,...,n-1$.

It is well known that the possible values of the squared length of the second
fundamental forms of all closed isoparametric hypersurfaces with constant mean
curvature in the unit sphere form a discrete set, which was explicitly
given by Muto \cite{Muto1988}.
 In 2014, Xu-Xu \cite{XX2014} showed that there exists a compact isoparametric hypersurface with 3 distinct principal curvatures in the unit sphere $\mathbb{S}^{3k+1}$ satisfying $S=2n+3nH^2$ for $n=3k$, where $k=2,4,8$.
  A direct computation shows that $$\alpha_{[\frac{n}{2}]}(n,H)<2n+3nH^2.$$
  Motivated by Theorem A and the inequality above, we have the following open problem.
\begin{problem} Let $M$ be a compact hypersurface with constant mean curvature and constant scalar curvature in the unit sphere $\mathbb{S}^{n+1}$. Assume that $$\alpha(n, H)\leq S\leq\alpha_2(n,H),$$  where $n\geq 4$ and $H\neq0$. Is it possible to prove that $M$ must be one of the isoparametric hypersurfaces $\mathbb{S}^{n-k}(\frac{1}{\sqrt{1+\lambda_k^2}})\times \mathbb{S}^{k}(\frac{\lambda_k}{\sqrt{1+\lambda_k^2}})$ for $k=1, 2$? Here $\lambda_k=\frac{n|H|+\sqrt{n^2H^2+4k(n-k)}}{2(n-k)}$.\end{problem}

In this paper, we first verify the third gap theorem for hypersurfaces in a sphere.
\begin{theorem}
  \label{Thm1}Let $M$ be a closed
  hypersurface with constant mean curvature and constant scalar curvature in the unit sphere
  $\mathbb{S}^{n + 1}$. If $S > \alpha (n, H)$,  where $n\geq 4$ and $H\neq 0$, then $$S >\alpha (n,
  H) + B_n\frac{n H^2}{n - 1}.$$ Here $$B_n=\left\{\begin{array}{llll} \frac{1}{5},\mbox{\hspace*{6mm} $for$\ }4\leq n \leq 20,\\
  \frac{49}{250},\mbox{\hspace*{3mm} $for$\ }n>20.\\
 \end{array} \right.$$
\end{theorem}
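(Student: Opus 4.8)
The plan is to argue by contradiction: assume $M$ satisfies $\alpha(n,H) < S \le \alpha(n,H) + B_n\frac{nH^2}{n-1}$ and derive a contradiction, replacing $A$ by $-A$ if necessary so that $H>0$. First I would fix a point, diagonalize the second fundamental form $A=(h_{ij})$ with principal curvatures $\lambda_i$, and record that, since $H$ and $S$ are constant, $\nabla H=0$ and $\nabla S=0$; writing $h_{ijk}$ for the components of $\nabla A$, these give for every $k$ the two pointwise linear constraints $\sum_i h_{iik}=0$ and $\sum_i \lambda_i h_{iik}=0$. Setting $\Phi=S-nH^2$ and $\sigma_3=\sum_i(\lambda_i-H)^3$, the Simons identity for a constant-mean-curvature hypersurface in $\mathbb{S}^{n+1}$ takes the form
\[
\tfrac12\Delta\Phi=|\nabla A|^2+nH\sigma_3+\Phi\bigl(n+nH^2-\Phi\bigr),
\]
and, as $\Phi$ is constant, integrating over the closed $M$ yields the first basic identity $\int_M|\nabla A|^2=\int_M[\Phi^2-(n+nH^2)\Phi-nH\sigma_3]$. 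Combined with the sharp inequality $\sigma_3\ge-\frac{n-2}{\sqrt{n(n-1)}}\Phi^{3/2}$ (equality exactly when $n-1$ of the $\lambda_i-H$ coincide), this gives $\int_M|\nabla A|^2\le V\,g(\Phi)$, where $V$ is the volume and
\[
g(\Phi)=\Phi^2-(n+nH^2)\Phi+\frac{n(n-2)H}{\sqrt{n(n-1)}}\,\Phi^{3/2}.
\]
The function $g$ vanishes at $\Phi_\alpha:=\alpha(n,H)-nH^2$ with $g'(\Phi_\alpha)>0$, so $\Phi\le\Phi_\alpha$ already forces $\nabla A\equiv0$ (this is Theorem A); the whole difficulty is the regime $\Phi>\Phi_\alpha$, where $g(\Phi)>0$.

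Second, I would bring in the Bochner-type identity for $|\nabla A|^2$. Computing $\tfrac12\Delta|\nabla A|^2$ produces $|\nabla^2A|^2$, a cross term $\sum h_{ijk}(\Delta h_{ij})_{,k}$, and curvature terms quadratic in $\nabla A$. Substituting the Simons expression for $\Delta h_{ij}$ (whose zeroth-order part is a cubic polynomial in $A$, since $\nabla H=0$) and integrating by parts reduces everything, together with the further identity obtained from $\int_M\Delta f_3=0$ (valid because $M$ is closed even though $f_3=\sum_i\lambda_i^3$ need not be constant, and which relates $\int_M\sigma_4$ with $\sigma_4=\sum_i(\lambda_i-H)^4$ to the gradient integrals), to integrals of the schematic shape $\int_M P(\Phi,H,\sigma_3,\sigma_4)\,|\nabla A|^2$ together with $\int_M|\nabla^2A|^2$. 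Combining these with the first identity to eliminate the purely zeroth-order integrals, the goal is an estimate
\[
\int_M|\nabla^2A|^2\le\int_M\Psi(\Phi,H)\,|\nabla A|^2,
\]
in which $\Psi(\Phi,H)$ is an explicit function that stays negative for all $\Phi$ in the window $\Phi_\alpha<\Phi\le\Phi_\alpha+B_n\frac{nH^2}{n-1}$. Since the left-hand side is nonnegative, such an estimate forces $\nabla A\equiv0$ whenever $\Phi$ lies in this window.

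The heart of the argument, and the step I expect to be the main obstacle, is the algebraic optimization that makes $\Psi$ negative with the stated, nearly optimal, width. One must bound $\sigma_3$, $\sigma_4$ and the weighted gradient sums such as $\sum_i\lambda_i h_{iik}^2$ and $\sum_{i\ne j}\lambda_i h_{iij}^2$ in terms of $|\nabla A|^2$ and powers of $\Phi$, using the two linear constraints on $(h_{11k},\dots,h_{nnk})$ and refinements of the cubic inequality above. The extremal configurations are again the Clifford-torus pattern (all but one $\lambda_i-H$ equal) that realizes $\alpha(n,H)$, so a Lagrange-multiplier analysis around these configurations produces the sharp coefficients; the value $B_n\frac{nH^2}{n-1}$ is precisely the largest window-width on which $\Psi$ can be kept negative, and the change of $B_n$ at $n=20$ reflects a switch in which configuration, or which term in the estimate, becomes dominant as $n$ grows. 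Since $\Psi$ has to be controlled across the entire window rather than at its endpoint alone, the natural device is to expand $g$ and $\Psi$ about $\Phi_\alpha$: the first identity already shows $\int_M|\nabla A|^2$ is of order $\Phi-\Phi_\alpha$, and the second identity must beat this margin uniformly.

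Finally, once $\nabla A\equiv0$, the hypersurface $M$ is isoparametric, so its principal curvatures are constant and $S$ belongs to the discrete set of values realized by closed isoparametric hypersurfaces with constant mean curvature (explicitly determined by Muto), whose two-curvature members are the $\alpha_k(n,H)$ of the introduction. A direct estimate of $\alpha_2(n,H)-\alpha_1(n,H)$ for both small and large $H$ shows that the half-open window $(\alpha(n,H),\,\alpha(n,H)+B_n\frac{nH^2}{n-1}]$ contains none of these values: its left endpoint is $\alpha_1(n,H)$, excluded because $S>\alpha(n,H)$, while the nearest admissible value above $\alpha(n,H)$ already exceeds the right endpoint. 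Hence $S$ cannot take a value in the window, contradicting the assumption and proving the theorem. I expect the delicate point throughout to be keeping every error term genuinely negative across the full window with the sharp constant $B_n$, rather than merely at the boundary $\Phi_\alpha$.
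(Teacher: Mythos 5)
Your outline follows the Peng--Terng integral route (integrated Simons identity plus a Bochner formula for $|\nabla A|^2$, then an algebraic optimization making the coefficient $\Psi$ negative on the window), and this is genuinely different from what the paper does --- but it contains a genuine gap: the central estimate $\int_M|\nabla^2A|^2\le\int_M\Psi(\Phi,H)\,|\nabla A|^2$ with $\Psi<0$ across the whole window is never derived. You state it as a goal and explicitly defer the ``main obstacle,'' namely the optimization that would produce $B_n=\tfrac15$ (resp.\ $\tfrac{49}{250}$); but that optimization \emph{is} the content of the theorem. Moreover, there is concrete reason to doubt that this route reaches the stated constants: the integral method historically yields widths like $\tfrac1{12n}$, $\tfrac n3$, $\tfrac{3n}{7}$ in the minimal case, and in the CMC case Xu--Tian could only run it under a smallness hypothesis $|H|<\gamma(n)$, whereas here the width $B_n\tfrac{nH^2}{n-1}$ is proportional to $H^2$ and must hold uniformly in $H$ --- an $H$-dependence that does not fall out of the Bochner machinery by itself. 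Your closing step is also incomplete: after $\nabla A\equiv0$ you must exclude \emph{every} isoparametric value of $S$ from the half-open window, including hypersurfaces with $g=3,4,6$ distinct principal curvatures, not only the two-curvature values $\alpha_k(n,H)$; that quantitative exclusion (essentially $\alpha_2-\alpha_1>B_n\tfrac{nH^2}{n-1}$ together with the analogous statement for $g\ge3$) is itself nontrivial, and the classification of CMC hypersurfaces with $S$ between $\alpha_1$ and $\alpha_2$ is closely related to the paper's stated Open Problem.

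For comparison, the paper avoids integral identities entirely, which is also why its Theorem \ref{Thm2} covers \emph{complete} (possibly noncompact) hypersurfaces in space forms, where your compactness-based integrations are unavailable. Since $S$ is constant, the Simons formula becomes the pointwise identity \eqref{dh2+nHS}; the window hypothesis then gives the pointwise bound $\phi\le\tfrac{B_n}2\sqrt{n/(n-1)}$, and Lemma \ref{phi<1/16sqrt} shows that the smallest principal curvature is simple everywhere, with a definite gap $\mu_2-\mu_1$. Hence $\lambda_1$ is smooth and satisfies the elliptic equation \eqref{Laplam}; the paper bounds $\Delta\lambda_1$ from below by a nonnegative combination of $|\nabla h|^2$ and $\phi$, and the Omori--Yau maximum principle forces $|\nabla h|\equiv\phi\equiv 0$ and $\mathring{S}=\mathring{\alpha}$, contradicting $S>\alpha(n,H)$. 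In particular the constants $B_n$, and their change at $n=20$, arise transparently from the elementary case analysis in Lemma \ref{phi<1/16sqrt}, not from a Lagrange-multiplier analysis of gradient terms. To salvage your proposal you would either have to carry out the full Peng--Terng optimization with these constants (feasibility unclear), or switch to the paper's pointwise mechanism.
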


 Set
\[ \alpha (n, H, c) = n c + \frac{n^3}{2 (n - 1)} H^2 - \frac{n (n - 2)}{2 (n
   - 1)} \sqrt{n^2 H^4 + 4 (n - 1) c H^2} , \]
and $\mathring{\alpha} (n, H, c) = \alpha (n, H, c) - n H^2$.  More generally, we prove the following gap theorem for complete hypersurfaces in space forms.

\begin{theorem}
  \label{Thm2}Let $M$ be a complete
  hypersurface with constant mean curvature and constant scalar curvature in the
  simply connected space form $\mathbb{F}^{n + 1} (c)$ with $c + H^2 > 0$.
  If
  \[ 0 \leq S - \alpha (n, H, c) \leq B_n \min \left\{ \frac{nH^2}{n -
     1}, \mathring{\alpha} (n, H, c) \right\}, \]
  where $n\geq 4$ and $H\neq 0$, then $S = \alpha (n, H, c)$ and $M$ is the isoparametric hypersurface
  $\mathbb{F}^{n - 1} (c + \lambda^2) \times \mathbb{F}^1 (c + c^2 \lambda^{-
  2})$. Here $$\lambda = \frac{n | H | + \sqrt{n^2 H^2 + 4 (n - 1) c}}{2 (n -
  1)},$$
  and $B_n$ is the same as in Theorem 1.
\end{theorem}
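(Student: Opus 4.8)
The plan is to reduce the scalar-curvature hypothesis to the constancy of $S$, to upgrade Theorem~\ref{Thm1} to complete hypersurfaces in a general space form, and then to treat the equality case. Since $H$ is constant, the Gauss equation $n(n-1)r=n(n-1)c+n^2H^2-S$ shows that constant scalar curvature is equivalent to $S$ being constant; hence $\mathring S:=S-nH^2$ is also constant. Writing $\mathring h=h-H\,\mathrm{id}$ for the traceless second fundamental form, so that $|\nabla h|^2=|\nabla\mathring h|^2$ because $H$ is constant, I would record the space-form Simons identity for $\mathring S$ together with Okumura's inequality $|\sum_i\mu_i^3|\le\frac{n-2}{\sqrt{n(n-1)}}\,\mathring S^{3/2}$ for the eigenvalues $\mu_i$ of $\mathring h$, which are the standard inputs to the estimates behind Theorem~\ref{Thm1}.

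The heart of the matter is a complete space-form analogue of Theorem~\ref{Thm1}: if $M$ is complete with $c+H^2>0$, $n\ge4$, $H\ne0$ and $\alpha(n,H,c)<S\le\alpha(n,H,c)+B_n\mathring\alpha(n,H,c)$, then in fact $S>\alpha(n,H,c)+B_n\frac{nH^2}{n-1}$. Granting this, the theorem is immediate: the bound $S-\alpha(n,H,c)\le B_n\frac{nH^2}{n-1}$ supplied by the other term of the minimum contradicts the gap unless $S=\alpha(n,H,c)$, so the two terms of the minimum play distinct roles. For $c>0$ and $M$ closed, rescaling $\mathbb{F}^{n+1}(c)=\mathbb{S}^{n+1}(1/\sqrt c)$ to the unit sphere, under which $\alpha(n,H,c)=c\,\alpha(n,H/\sqrt c)$ and the pinching is preserved, reduces the statement directly to Theorem~\ref{Thm1}. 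For general $c$ with $c+H^2>0$ I would rerun the Bochner/Peng--Terng type estimates on $\int\varphi^2|\nabla\mathring h|^2$ and on the cubic and quartic curvature integrals, with $nc$ in place of $n$; the hypothesis $c+H^2>0$ supplies exactly the positivity needed to preserve the sharp constants $B_n$.

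The main obstacle is non-compactness, because those estimates are obtained in the closed case by integrating exact divergences over $M$. Here I would exploit the upper bound $S\le\alpha(n,H,c)+B_n\mathring\alpha(n,H,c)$ coming from the $\mathring\alpha$ term of the minimum: it makes $S$, and hence the whole second fundamental form, uniformly bounded, so that in a space form the Ricci curvature of $M$ is bounded from below and the Omori--Yau generalized maximum principle applies. Inserting a cutoff $\varphi$ supported in geodesic balls and letting the radius tend to infinity, the bounded geometry forces the boundary terms carrying $|\nabla\varphi|$ to vanish in the limit, which is what transplants the closed-manifold integral identities, and with them the constants $B_n$, to complete $M$. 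Controlling these error terms without degrading $B_n$ is the delicate point of the argument.

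Finally, once $S=\alpha(n,H,c)=\alpha_1(n,H,c)$, I would analyze the equality case. Every inequality used above, in particular Okumura's, must then be saturated, which forces $\nabla\mathring h\equiv0$ and the principal curvatures to be constant with exactly two distinct values of multiplicities $n-1$ and $1$; thus $M$ is isoparametric with two distinct principal curvatures. The classification of such hypersurfaces in simply connected space forms identifies $M$ with the standard product $\mathbb{F}^{n-1}(c+\lambda^2)\times\mathbb{F}^1(c+c^2\lambda^{-2})$, and imposing $S=\alpha(n,H,c)$ together with the mean-curvature normalization pins down $\lambda=\frac{n|H|+\sqrt{n^2H^2+4(n-1)c}}{2(n-1)}$, as claimed; this step parallels the equality discussion of Theorem~A.
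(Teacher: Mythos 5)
There is a genuine gap at the heart of your proposal: the mechanism you offer for passing from closed manifolds to complete ones does not work. Your plan is to run Peng--Terng type integral estimates against cutoff functions $\varphi$ on geodesic balls and to claim that ``bounded geometry forces the boundary terms carrying $|\nabla\varphi|$ to vanish in the limit.'' Constancy of $S$ does bound the second fundamental form and hence gives a lower Ricci bound, but it gives \emph{no} upper bound on volume growth: a complete hypersurface of $\mathbb{F}^{n+1}(c)$, in particular for $c\le 0$ (which is exactly the new case this theorem covers), can have exponential volume growth, so cutoff error terms of size $R^{-1}\operatorname{Vol}(B_{2R}\setminus B_R)$ do not tend to zero as $R\to\infty$. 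Even in situations where such errors can be absorbed, the absorption costs an $\epsilon$ in every inequality, which is incompatible with retaining the sharp constants $B_n$ --- the point you yourself flag as ``delicate'' is precisely where the argument breaks. Invoking the Omori--Yau principle does not repair this, because it is a pointwise tool and produces no integral identities; moreover you never specify a function to which it would be applied. There is also a structural circularity: in this paper Theorem \ref{Thm1} is \emph{deduced} from Theorem \ref{Thm2} (see the Remark), and no closed-case gap theorem above $\alpha(n,H)$ pre-exists in the literature (Theorem A is rigidity below $\alpha$, and Xu--Tian's gap sits above $\beta(n,H)$), so there is no independent closed-case result available to ``upgrade.''

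What makes the paper's proof work, and what your proposal is missing, is that nothing is ever integrated over $M$. Since $S$ is constant, Simons' formula \eqref{LapS} is already the pointwise identity \eqref{dh2+nHS}; combined with Lemma \ref{idealp} and the pinching hypothesis it yields the pointwise bound $\phi\le\frac{B_n}{2}\sqrt{n/(n-1)}$, and Lemma \ref{phi<1/16sqrt} then gives a uniform spectral gap $\mu_2-\mu_1>0$. Hence the smallest principal curvature $\lambda_1$ has multiplicity one everywhere, is a smooth bounded function, and satisfies formula \eqref{Laplam}; the estimates \eqref{Laph11=}--\eqref{1/2sqrt} give
$\Delta\lambda_1\ge 3^{-10}\sqrt{(n-1)/(n\mathring{S})}\,|\nabla h|^2+\tfrac{2}{5}\sqrt{n(n-1)}\,H\mathring{S}\phi\ge 0$.
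The Omori--Yau principle is applied to the single function $\lambda_1$ (legitimate because constant $S$ bounds the curvature), producing a sequence along which $\Delta\lambda_1$ has nonpositive limit, hence $|\nabla h|\to 0$ and $\phi\to 0$; since $|\nabla h|^2+nH\mathring{S}^{3/2}\phi$ is a constant by \eqref{dh2+nHS}, it vanishes identically, which forces $\mathring{S}=\mathring{\alpha}$, $\nabla h\equiv 0$, $\eta\equiv\sigma\equiv 0$, and the isoparametric conclusion. Your final paragraph on the equality case is essentially this endgame, but in the paper it is the conclusion of the pointwise maximum-principle argument, not of a cutoff scheme that cannot be carried out.
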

\begin{remark} Notice that $\frac{n}{n - 1} H^2 < \mathring{\alpha} (n, H, c)$ if $c > 0$, and
$\frac{n}{n - 1} H^2 \geq \mathring{\alpha} (n, H, c)$ if $c \leq 0$.
Thus, Theorem \ref{Thm2} implies Theorem \ref{Thm1}.\end{remark}

\

\section{hypersurfaces with constant mean curvature}

Let $\mathbb{F}^{n + 1} (c)$ be the ($n + 1$)-dimensional simply connected
space form with constant sectional curvature $c$, and let $M^n$ be a
hypersurface in $\mathbb{F}^{n + 1} (c)$. We denote by $\overline{\nabla}$ the
Levi-Civita connection of $\mathbb{F}^{n + 1} (c)$, and by $\nabla$ the
connection induced on $M$. Let $\nu$ be a unit normal vector field of $M$.
Denote by $h$ the second fundamental form of $M$, which is a symmetric
bilinear form given by
\[ \overline{\nabla}_X Y = \nabla_X Y + h (X, Y) \nu .\]
The shape operator $A : T M \rightarrow
T M$ is defined by
\[ A (X) = - \overline{\nabla}_X \nu . \]
The second fundamental form and the shape operator are related by $h (X, Y) =
\langle A (X), Y \rangle$.
The eigenvalues of $A$ are called the principal curvatures of $M$. If the
principal curvatures are all constant, then $M$ is called an isoparametric
hypersurface.

We choose a local orthonormal frame $\{ e_i \}$ for the tangent bundle of $M$.
Set $h_{i j} = h (e_i, e_j)$. The mean curvature is given by $H = \frac{1}{n}
\sum_i h_{i i}$. Denote by $S$ the squared length of the second fundamental
form, i.e. $S = \sum_{i, j} h_{i j}^2$. We denote by $h_{i j k}$ the covariant
derivative of $h_{i j}$. Then the Codazzi equation implies that $h_{i j k}$ is
symmetric in $i, j$ and $k$.

Suppose that $M$ has constant mean curvature. Then we have the following
Simons' type formulas.
\begin{equation}
  (\Delta h)_{i j} = - n cH \delta_{i j} + (n c - S) h_{i j} + n H \sum_k h_{i
  k} h_{j k}, \label{Laph}
\end{equation}
\begin{equation}
  \frac{1}{2} \Delta S = | \nabla h |^2 + S (n c - S) - n^2 c H^2 + n H
  \operatorname{tr} A^3 . \label{LapS}
\end{equation}

Now we suppose that $\lambda$ is a principal curvature whose multiplicity is 1
at some point $x \in M$. Then $\lambda$ is smooth in a neighborhood of $x$.
Let $u$ be the unit eigenvector corresponding to $\lambda$, i.e.
\begin{equation}
  \lambda u = A (u) . \label{eigen}
\end{equation}
Differentiating {\eqref{eigen}} with respect to a tangent vector $X$, we get
\begin{equation}
  X (\lambda) u + \lambda \nabla_X u = \nabla_X A (u) + A (\nabla_X u) .
  \label{deigen}
\end{equation}
Let $V$ be the orthogonal complement of $u$ in tangent space $T_x M$. Then $V$
is an ($n - 1$)-dimensional $A$-invariant subspace. Since $| u | = 1$, we have
$\nabla_X u \in V$. Thus $A (\nabla_X u) \in V$. Let $(\cdot)^V$ denote the
projection onto $V$. From {\eqref{deigen}} we obtain
\begin{equation}
  X (\lambda) = \langle \nabla_X A (u), u \rangle = \nabla h (u, u, X)
  \label{deigen1}
\end{equation}
and
\begin{equation}
  (\lambda \operatorname{id} - A) (\nabla_X u) = [\nabla_X A (u)]^V .
\end{equation}
Since  $\lambda \operatorname{id}_V - A|_V$ is invertible, we get
\[ \nabla_X u = (\lambda \operatorname{id}_V - A|_V)^{- 1} ([\nabla_X A (u)]^V) . \]
Taking covariant derivative of left and right hand sides
of {\eqref{deigen1}} with respect to $Y$, we get
\begin{eqnarray*}
  \nabla^2 \lambda (X, Y) & = & \nabla^2 h (u, u, X, Y) + 2 \nabla h (\nabla_Y
  u, u, X)\\
  & = & \nabla^2 h (u, u, X, Y) + 2 \nabla h ((\lambda \operatorname{id}_V - A|_V)^{-
  1} ([\nabla_Y A (u)]^V), u, X) .
\end{eqnarray*}
Taking trace, we obtain
\begin{equation}
  \Delta \lambda = \Delta h (u, u) + 2 \sum_i \nabla h ((\lambda \operatorname{id}_V -
  A|_V)^{- 1} ([\nabla_{e_i} A (u)]^V), u, e_i) \label{Laplam0}
\end{equation}
at point $x$. We choose the orthonormal frame $\{ e_i \}$ at $x$, such that
$h_{i j} = \lambda_i \delta_{i j}$, $\lambda_1 = \lambda$ and $e_1 = u$. Then
formula {\eqref{Laplam0}} can be written as
\[ \Delta \lambda_1 = (\Delta h)_{11} + 2 \sumik  \frac{h_{1 i k}^2}{\lambda_1
   - \lambda_k} . \]
This together with {\eqref{Laph}} yields
\begin{equation}
  \Delta \lambda_1 = - n cH + (n c - S) \lambda_1 + n H \lambda_1^2 + 2 \sumik
  \frac{h_{1 i k}^2}{\lambda_1 - \lambda_k} . \label{Laplam}
\end{equation}

\

\section{some algebraic inequalities}

Let $\mu_1 \leq \cdots \leq \mu_n$ be $n$ real numbers, which
satisfy
\[ \sum_i \mu_i = 0 \qquad \operatorname{and} \qquad \sum_i \mu_i^2 = 1. \]
Set
\begin{equation}
  \phi = \sum_i \mu_i^3 + \frac{n - 2}{\sqrt{n (n - 1)}}, \quad \eta =
  \sqrt{\frac{n}{n - 1}} \mu_1 + 1, \quad \sigma = \left[ \sum_{i \geq 2}
  \left( \mu_i + \frac{\mu_1}{n - 1} \right)^2 \right]^{\frac{1}{2}} .
  \label{fhs}
\end{equation}
\begin{lemma}
  \label{mui}The functions $\phi$, $\eta$ and $\sigma$ satisfy
  \begin{enumerateroman}
    \item $\frac{\sqrt{n (n - 1)}}{n - 2} \phi \geq \eta \geq
    \frac{\sigma^2}{2}$,

    \item $\sqrt{n (n - 1)} \phi \geq \eta [3 n - 3 (n + 1) \eta - 2\sqrt{n(n-1)}
    \sigma]$.
  \end{enumerateroman}
\end{lemma}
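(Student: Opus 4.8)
The plan is to translate everything into the single variable $\mu_1$, exploiting the fact that $\eta$ and $\sigma$ are functions of $\mu_1$ alone. First I would record the normalizations: since $\sum_i\mu_i=0$ and $\sum_i\mu_i^2=1$ with $\mu_1$ the smallest entry, one has $\mu_1<0$ and $\mu_1^2\le\frac{n-1}{n}$. Expanding $\sigma^2$ using $\sum_{i\ge2}\mu_i=-\mu_1$ and $\sum_{i\ge2}\mu_i^2=1-\mu_1^2$ gives the identity
\[ \sigma^2=1-\frac{n}{n-1}\mu_1^2=\eta(2-\eta), \]
where the second equality uses $\eta-1=\sqrt{\tfrac{n}{n-1}}\,\mu_1$. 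In particular $0\le\eta\le\frac{n-2}{n-1}$, and the right-hand inequality of (i) becomes $\eta\ge\frac12\eta(2-\eta)$, i.e. $\eta^2\ge0$, which is immediate.

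For the left-hand inequality of (i), I would first rewrite it as $\sum_i\mu_i^3\ge\frac{n-2}{n-1}\mu_1$. The key estimate comes from the manifestly nonnegative quantity $\sum_i(\mu_i-\mu_1)(\mu_i-c)^2\ge0$, valid for every real $c$ because $\mu_i\ge\mu_1$. Expanding and using the two moment constraints turns this into $\sum_i\mu_i^3\ge\mu_1+2c+nc^2\mu_1$; maximizing the right side over $c$ (at $c=-\tfrac{1}{n\mu_1}$) yields
\[ \sum_i\mu_i^3\ge\mu_1-\frac{1}{n\mu_1}. \]
Substituting this, the desired inequality reduces to $\mu_1-\frac{1}{n\mu_1}\ge\frac{n-2}{n-1}\mu_1$, which after clearing the negative factor $\mu_1$ is exactly $\mu_1^2\le\frac{n-1}{n}$, already known. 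I expect (i) to be the easy part.

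The substantive part is (ii). Here I would split off $\mu_1$ and measure the remaining entries by their deviations $b_i=\mu_i+\frac{\mu_1}{n-1}$ from their common mean, so that $\sum_{i\ge2}b_i=0$ and $\sum_{i\ge2}b_i^2=\sigma^2$. A short computation gives
\[ \sum_i\mu_i^3=\frac{n(n-2)}{(n-1)^2}\mu_1^3-\frac{3\mu_1}{n-1}\sigma^2+P,\qquad P:=\sum_{i\ge2}b_i^3. \]
One must not reuse the crude bound from (i) here: it is too weak near $\eta=0$, precisely where (ii) is sharp. Instead I would apply the standard sharp cubic inequality (as in Chern--do Carmo--Kobayashi) to the $n-1$ numbers $b_i$, namely $P\ge-\frac{n-3}{\sqrt{(n-1)(n-2)}}\sigma^3$, which holds for any reals of zero sum. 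Writing $t=\eta-1\in[-1,-\tfrac1{n-1}]$, so that $\mu_1=\sqrt{\tfrac{n-1}{n}}\,t$ and $\sigma=\sqrt{1-t^2}$, I would substitute these into $\sqrt{n(n-1)}\,\phi=\sqrt{n(n-1)}\sum_i\mu_i^3+(n-2)$ and into the bracket on the right of (ii).

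The decisive step, which I expect to carry the proof, is that after this substitution the radical-free part of the difference collapses to $(n+1)(t+1)^3$, while the radical part carries a common factor $1+t$. Thus the whole difference factors as
\[ (1+t)\Big\{(n+1)(t+1)^2+\sqrt{1-t^2}\,\big[\,2\sqrt{n(n-1)}-(n-3)\sqrt{\tfrac{n}{n-2}}\,(1-t)\,\big]\Big\}. \]
Since $1+t=\eta\ge0$, it remains to check the braced factor is nonnegative; as $1-t\le2$ on the range, the bracket is at least $2\sqrt{n(n-1)}-2(n-3)\sqrt{\tfrac{n}{n-2}}$, which is nonnegative because $(n-1)(n-2)-(n-3)^2=3n-7>0$ for $n\ge4$. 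This proves (ii). The main obstacle is therefore not a single hard estimate but the bookkeeping: identifying the correct sharp estimate for $\sum_i\mu_i^3$ (the deviation form, not the global one) and verifying the fortunate factorization that reduces (ii) to the elementary inequality $\sqrt{(n-1)(n-2)}\ge n-3$.
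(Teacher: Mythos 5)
Your proof is correct, but it takes a genuinely different route from the paper's, most notably in part (ii). The paper's whole argument rests on the single self-contained identity
\[
\sum_i \Bigl(\mu_i + \tfrac{\mu_1}{n-1}\Bigr)^{2}(\mu_i - \mu_1)
= \phi - \tfrac{n-2}{\sqrt{n(n-1)}}\,\eta + \tfrac{\mu_1}{n-1}\,\sigma^{2},
\]
whose left-hand side is manifestly nonnegative: discarding it (together with the term $-\frac{\mu_1}{n-1}\sigma^2\ge 0$) yields (i), while retaining the $i\ge 2$ terms, bounding $\mu_i-\mu_1\ge\mu_2-\mu_1$ and using $|\mu_2+\frac{\mu_1}{n-1}|\le\sigma$ yields (ii) after elementary manipulation. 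Your part (i) is essentially a one-parameter version of this: your nonnegative quantity $\sum_i(\mu_i-\mu_1)(\mu_i-c)^2$ coincides with the paper's identity at $c=-\mu_1/(n-1)$, and optimizing over $c$ is legitimate, so (i) stands. In part (ii) you diverge completely: you split off $\mu_1$, apply the classical Okumura/Chern--do Carmo--Kobayashi cubic bound $P\ge-\frac{n-3}{\sqrt{(n-1)(n-2)}}\,\sigma^3$ to the zero-sum deviations $b_i$, and verify an exact factorization of the difference. I checked the algebra: the radical-free part does collapse to $(n+1)(t+1)^3$, the radical part does carry the factor $1+t$, and the final reduction to $(n-1)(n-2)\ge(n-3)^2$, i.e. $3n\ge 7$, is correct. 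The trade-off is clear: the paper's proof is entirely self-contained and shorter, whereas yours imports a nontrivial (though standard) external lemma; in return, your factorization is sharper---it exhibits the exact slack $(n+1)\eta^3$ and makes transparent that (ii) is tight only near $\eta=0$, precisely where the paper's estimate $\mu_2\ge-\frac{\mu_1}{n-1}-\sigma$ is crude. Two minor points: your asserted range $\eta\le\frac{n-2}{n-1}$ (i.e. $t\le-\frac{1}{n-1}$) is true but never justified in your write-up; fortunately your argument only needs $1-t\le 2$, i.e. $\eta\ge 0$, which does follow from $\mu_1^2\le\frac{n-1}{n}$. And if the lemma is meant to remain self-contained in the paper's style, the Okumura inequality should be cited or proved rather than merely invoked.
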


\begin{proof}
  By the definitions, we have
  \[ \sigma^2 = \sum_{i \geq 2} \left( \mu_i + \frac{\mu_1}{n - 1}
     \right)^2 = 1 - \frac{n}{n - 1} \mu_1^2 = \eta (2 - \eta) . \]
  Since $\mu_1 < 0$, we get $\eta < 1$. Then we have
  \[ \eta = 1 - \sqrt{1 - \sigma^2} \geq \frac{\sigma^2}{2} . \]

  By a direct computation, we get
  \begin{eqnarray}
    &  & \sum_i \left( \mu_i + \frac{\mu_1}{n - 1} \right)^2 (\mu_i - \mu_1)
    \nonumber\\
    & = & \sum_i \mu_i^3 - \frac{\mu_1}{n - 1} \left( n - 3 + \frac{n}{n - 1}
    \mu_1^2 \right)  \label{phi-}\\
    & = & \phi - \frac{n - 2}{\sqrt{n (n - 1)}} \eta + \frac{\mu_1}{n - 1}
    \sigma^2 . \nonumber
  \end{eqnarray}
  Thus we have
  \[ \phi \geq \frac{n - 2}{\sqrt{n (n - 1)}} \eta . \]
  Hence we prove inequality (i).

  From {\eqref{phi-}}, we have
  \begin{eqnarray}
    \phi & \geq & \frac{n - 2}{\sqrt{n (n - 1)}} \eta - \frac{\mu_1}{n -
    1} \sigma^2 + \sum_{i \geq 2} \left( \mu_i + \frac{\mu_1}{n - 1}
    \right)^2 (\mu_2 - \mu_1) \nonumber\\
    & = & \frac{n - 2}{\sqrt{n (n - 1)}} \eta - \frac{\mu_1}{n - 1} \sigma^2
    + \sigma^2 (\mu_2 - \mu_1)  \label{phi>=}\\
    & = & \frac{n - 2}{\sqrt{n (n - 1)}} \eta + \sigma^2 \left( \mu_2 -
    \frac{n}{n - 1} \mu_1 \right) . \nonumber
  \end{eqnarray}
  Since $\left| \mu_2 + \frac{\mu_1}{n - 1} \right| \leq \sigma$, we have
  \begin{eqnarray}
    \sigma^2 \left( \mu_2 - \frac{n}{n - 1} \mu_1 \right) & \geq &
    \sigma^2 \left( - \frac{1}{n - 1} \mu_1 - \sigma - \frac{n}{n - 1} \mu_1
    \right) \nonumber\\
    & = & \eta (2 - \eta) \left( \frac{n + 1}{\sqrt{n (n - 1)}} (1 - \eta) -
    \sigma \right)  \label{sig2mu2-}\\
    & \geq & \eta \left[ \frac{n + 1}{\sqrt{n (n - 1)}} (2 - 3 \eta) - 2
    \sigma \right] \nonumber\\
    & = & \frac{1}{\sqrt{n (n - 1)}} \eta [(n + 1)(2 - 3 \eta) - 2\sqrt{n(n-1)} \sigma]
    . \nonumber
  \end{eqnarray}
  Substituting {\eqref{sig2mu2-}} into {\eqref{phi>=}}, we obtain conclusion (ii).
\end{proof}

\begin{lemma}
  \label{phi<1/16sqrt} Let $n \geq 4$ and \[B_n=\left\{\begin{array}{llll} \frac{1}{5},\mbox{\hspace*{6mm} $for$\ }4\leq n \leq 20,\\
  \frac{49}{250},\mbox{\hspace*{3mm} $for$\ }n>20.\\
  \end{array} \right.\]
  If $\phi \leq \frac{B_n}{2}
  \sqrt{\frac{n}{n - 1}}$, then $\eta < 0.0445$, $\sigma <
  0.295$ and $\mu_2 - \mu_1 > \tfrac{2}{3-3^{-9}} \sqrt{\tfrac{n}{n -
        1}}$.
\end{lemma}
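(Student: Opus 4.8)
The plan is to establish the three bounds in the order listed, treating the bound on $\eta$ as the core of the argument and reading off the other two from it. Throughout I would work with the two inequalities of Lemma~\ref{mui}, the identity $\sigma^2=\eta(2-\eta)$ and the relation $\mu_1=-\sqrt{\frac{n-1}{n}}(1-\eta)$ recorded in its proof, and I would rewrite the hypothesis $\phi\le\frac{B_n}{2}\sqrt{\frac{n}{n-1}}$ in the homogeneous form $\sqrt{n(n-1)}\,\phi\le\frac{B_n n}{2}$.

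To bound $\eta$, first Lemma~\ref{mui}(i) gives $\eta\le\frac{\sqrt{n(n-1)}}{n-2}\phi\le\frac{B_n n}{2(n-2)}=:\eta_0$, which confines $\eta$ to the bounded interval $[0,\eta_0]$. Substituting $\sigma=\sqrt{\eta(2-\eta)}$ into Lemma~\ref{mui}(ii) and using the hypothesis then yields
\[ \frac{B_n n}{2}\ge P(\eta):=\eta\Big[3n-3(n+1)\eta-2\sqrt{n(n-1)}\sqrt{\eta(2-\eta)}\Big]. \]
The bracket is decreasing in $\eta$ while the factor $\eta$ increases, so $P$ first increases and then decreases on $[0,\eta_0]$, and its minimum over any closed subinterval is attained at an endpoint. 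Hence, to contradict $\eta\ge0.0445$, it is enough to verify $P(0.0445)>\frac{B_n n}{2}$ and $P(\eta_0)>\frac{B_n n}{2}$. Dividing by $n$ and replacing $\frac{n+1}{n}$ and $\sqrt{\frac{n-1}{n}}$ by their extreme values over each of the ranges $4\le n\le20$ and $n>20$ (matching the two values of $B_n$) turns both into one-variable numerical inequalities. The binding one is $P(0.0445)/n>\frac{B_n}{2}$, which holds with only a slim margin; this gives $\eta<0.0445$.

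The bound on $\sigma$ is then immediate: since $\eta\mapsto\eta(2-\eta)$ increases on $[0,1)$, $\eta<0.0445$ forces $\sigma=\sqrt{\eta(2-\eta)}<\sqrt{0.0445(2-0.0445)}<0.295$ (once more a tight estimate). For the gap in the principal curvatures I would use the ordering $\mu_2\le\cdots\le\mu_n$. Setting $\nu_i=\mu_i+\frac{\mu_1}{n-1}$ for $i\ge2$, the definitions give $\sum_{i\ge2}\nu_i=0$ and $\sum_{i\ge2}\nu_i^2=\sigma^2$, with $\nu_2$ the least of these $n-1$ numbers; a Cauchy--Schwarz estimate applied to the other $n-2$ of them forces $\nu_2\ge-\sqrt{\frac{n-2}{n-1}}\sigma$. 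Together with $\mu_1=-\sqrt{\frac{n-1}{n}}(1-\eta)$ this gives
\[ \mu_2-\mu_1\ \ge\ \sqrt{\frac{n}{n-1}}\,(1-\eta)-\sqrt{\frac{n-2}{n-1}}\,\sigma. \]

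This last step is where the estimates are sharpest, and I expect it to be the main obstacle. Dividing the displayed bound by $\sqrt{\frac{n}{n-1}}$, the claim becomes $(1-\eta)-\sqrt{\frac{n-2}{n}}\,\sigma>\frac{2}{3-3^{-9}}$, and here the rounded bounds $\eta<0.0445$, $\sigma<0.295$ do \emph{not} suffice for large $n$, because $\sqrt{\frac{n-2}{n}}\to1$. Instead I would substitute $\sigma=\sqrt{\eta(2-\eta)}$, so that the left-hand side is decreasing in $\eta$, and insert the sharp $n$-dependent upper bound for $\eta$ furnished by $P(\eta)\le\frac{B_n n}{2}$ in place of the value $0.0445$. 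The worst case is $n\to\infty$, where the margin collapses to about $2\times10^{-3}$; this is exactly why $B_n$ is lowered to $\frac{49}{250}$ for $n>20$ (retaining $\frac15$ would push the limiting value of $\eta$ too high and break the inequality), and why the target appears in the form $\frac{2}{3-3^{-9}}=\frac{2}{3}(1-3^{-10})^{-1}$, placed just below the limiting infimum. Carrying out this limiting check uniformly in $n$ is the delicate core of the argument.
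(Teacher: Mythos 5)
Your strategy is the same as the paper's: confine $\eta$ to $[0,\eta_0]$ via Lemma \ref{mui}(i), turn Lemma \ref{mui}(ii) into a one-variable inequality by using $\sigma=\sqrt{\eta(2-\eta)}$, argue that the minimum of the resulting function on a closed interval sits at an endpoint, worst-case the $n$-dependence over the two ranges of $B_n$, and then read off the bounds on $\sigma$ and $\mu_2-\mu_1$. (Your Cauchy--Schwarz refinement $\nu_2\ge-\sqrt{\tfrac{n-2}{n-1}}\,\sigma$ is slightly sharper than the paper's $|\nu_2|\le\sigma$, but since $\sqrt{\tfrac{n-2}{n}}\to1$ it buys nothing in the hard regime.) However, two steps are genuinely incomplete. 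The smaller one: your endpoint-minimum principle is justified by ``the bracket decreases while the factor $\eta$ increases, so $P$ first increases and then decreases,'' and that inference is false in general --- a product of an increasing and a decreasing function need not be unimodal (let the bracket be constant, then drop steeply, then be constant again: the product increases, decreases, then increases). What actually saves the step is concavity: one can check that $\eta\mapsto\eta\sqrt{\eta(2-\eta)}$ is convex for $\eta\le\tfrac{3-\sqrt{3}}{2}$, so $P''<0$ on the relevant interval; the paper avoids even this by first bounding $\sigma\le\sqrt{2\eta}$, for which $f''<0$ is immediate. This flaw is fixable, but as written the step is not proved.

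The serious gap is the one you flag yourself and then do not close: the inequality $\mu_2-\mu_1>\tfrac{2}{3-3^{-9}}\sqrt{\tfrac{n}{n-1}}$ for $n>20$. Your plan is to insert the sharp $n$-dependent threshold $\eta^*(n)$ determined by $P(\eta)\le\tfrac{B_nn}{2}$ and to check only the limit $n\to\infty$, asserting that this is the worst case. That assertion is not justified: as $n$ grows, the coefficient $3\tfrac{n+1}{n}$ in $P$ decreases while $2\sqrt{\tfrac{n-1}{n}}$ increases, so $\eta^*(n)$ has no obvious monotonicity, and the quantity $\bigl(1-\eta^*(n)\bigr)-\sqrt{\tfrac{n-2}{n}}\,\sigma^*(n)$ must be bounded below \emph{uniformly} in $n$, not merely in the limit. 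The paper resolves exactly this by worst-casing both coefficients simultaneously over $n>20$ (using $\tfrac{n+1}{n}\le\tfrac{22}{21}$ and $\tfrac{n-1}{n}<1$), which produces the uniform thresholds $\eta<0.04305$ and $\sigma<0.29026$, and then a single numerical verification
\[ 1-0.04305-0.29026=0.66669>\tfrac{2}{3-3^{-9}}\approx 0.666678 . \]
Note that the rigorous margin is about $10^{-5}$, not the $2\times10^{-3}$ suggested by your pointwise limiting computation; this razor-thin check is precisely the content your proposal leaves unverified, so the third conclusion of the lemma is not established for $n>20$. (Your side remarks are sound: keeping $B_n=\tfrac15$ for large $n$ would indeed push the limiting $\eta$ near $0.0438$ and make the left side drop to about $0.6635<\tfrac23$, which is why $B_n$ must shrink; and for $4\le n\le20$ your rounded bounds $\eta<0.0445$, $\sigma<0.295$ together with $\sqrt{\tfrac{n-2}{n}}\le\sqrt{0.9}$ do suffice.)
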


\begin{proof}
Since $\sigma=\sqrt{\eta(2-\eta)}\leq \sqrt{2
     \eta}$, we get from Lemma \ref{mui} (ii) that
 \[ \sqrt{\tfrac{n - 1}{n}} \phi
      \geq \eta \left( 3 - \frac{3(n+1)}{n}
     \eta -2\sqrt{\frac{2(n-1)\eta}{n}}  \right) .\]
  Let
  \[ f (\eta) := \eta \left( 3 - \frac{3(n+1)}{n}
     \eta -2\sqrt{\frac{2(n-1)\eta}{n}}\right). \]
  Then we have $f''(\eta) < 0$.

 Case (i). If $4\leq n\leq20$, then we have
 \[ f (\eta) \geq \eta \left( 3 - \frac{15}{4}
     \eta - \sqrt{\frac{38}{5}\eta}\right). \]
 We get from Lemma \ref{mui} (i) that
  \[ \eta \leq \frac{\sqrt{n (n - 1)}}{n - 2} \phi \leq \frac{n}{10 (n - 2)}
     \leq \frac{1}{5}  .\]
  Assume $\eta \geq 0.0445$. From $0.0445 \leq \eta
  \leq 0.2$, we get
  \[ f (\eta) \geq \min \left\{ f \left( 0.0445 \right), f \left(
    0.2 \right) \right\} > \frac{1}{10}. \]
  This contradicts the condition $\sqrt{\tfrac{n - 1}{n}} \phi \leq
  \frac{1}{10}$. Therefore, we have $\eta < 0.0445$ and $\sigma
  =\sqrt{\eta(2-\eta)} < 0.295$.

  Then we have
  \begin{eqnarray}
  \mu_2 - \mu_1 & \geq & - \frac{\mu_1}{n - 1} - \sigma - \mu_1
  \nonumber\\
  & = & \sqrt{\tfrac{n}{n - 1}} (1 - \eta) - \sigma  \label{muk-mu1}\\
  & \geq & \sqrt{\tfrac{n}{n - 1}} (1 - \eta - \sqrt{\tfrac{19}{20}}\sigma) \nonumber\\
  & > & 0.667 \sqrt{\tfrac{n}{n - 1}} . \nonumber
  \end{eqnarray}

Case (ii). If $ n>20$, then we have
 \[ f (\eta) \geq \eta \left( 3 - \frac{66}{21}
     \eta - 2\sqrt{2\eta}\right). \]
 We also get from Lemma \ref{mui} (i) that
  \[ \eta \leq \frac{\sqrt{n (n - 1)}}{n - 2} \phi \leq\frac{49n}{500(n - 2)}
     \leq 0.11  .\]
  Assume $\eta \geq 0.04305$. From $0.04305 \leq \eta
  \leq 0.11$, we get
  \[ f (\eta) \geq \min \left\{ f \left( 0.04305 \right), f \left(
    0.11 \right) \right\} > 0.098. \]
  This contradicts the condition $\sqrt{\tfrac{n - 1}{n}} \phi \leq
  \frac{49}{500}$. Therefore, we have $\eta < 0.04305$ and $\sigma
  =\sqrt{\eta(2-\eta)}< 0.29026$.

Then we get
    \[\mu_2 - \mu_1
     \geq  \sqrt{\tfrac{n}{n - 1}} (1 - \eta -\sigma)
    >\tfrac{2}{3-3^{-9}} \sqrt{\tfrac{n}{n - 1}}.
  \]
  Combining the two cases, we prove Lemma \ref{phi<1/16sqrt}.
\end{proof}

For integer $n > 1$, and real numbers $c, H$, we define
\[ \alpha (n, H, c) = n c + \frac{n^3}{2 (n - 1)} H^2 - \frac{n (n - 2)}{2 (n
   - 1)} \sqrt{n^2 H^4 + 4 (n - 1) c H^2}, \]
\begin{equation}
  \mathring{\alpha} (n, H, c) = \alpha (n, H, c) - n H^2 . \label{alp}
\end{equation}
\begin{lemma}
  \label{idealp}If $H^2 + c > 0$, then
  \[ (n - 2) \sqrt{\frac{n}{n - 1} H^2  \mathring{\alpha} (n, H, c)} = n (H^2 + c)
     - \mathring{\alpha} (n, H, c) . \]
\end{lemma}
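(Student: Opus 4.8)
The plan is to recognize the asserted equation as a radical identity whose content, after squaring, is a single polynomial identity in $H^2$ and $c$; the only genuine subtlety is justifying that one may take the \emph{positive} square root, which is exactly where the hypothesis $H^2+c>0$ enters. For brevity I write $D:=\sqrt{n^2H^4+4(n-1)cH^2}$, so that $D^2=n^2H^4+4(n-1)cH^2$. Substituting the definition of $\alpha(n,H,c)$ into $\mathring\alpha=\alpha-nH^2$ and collecting the terms free of the radical gives
\[
  \mathring\alpha(n,H,c)=nc+\frac{n(n^2-2n+2)}{2(n-1)}H^2-\frac{n(n-2)}{2(n-1)}D,
\]
after using $\tfrac{n^3}{2(n-1)}-n=\tfrac{n(n^2-2n+2)}{2(n-1)}$. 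First I would compute the right-hand side of the claimed identity: a direct cancellation yields
\[
  n(H^2+c)-\mathring\alpha(n,H,c)=\frac{n(n-2)}{2(n-1)}\bigl[D-(n-2)H^2\bigr],
\]
where I have used $1-\tfrac{n^2-2n+2}{2(n-1)}=-\tfrac{(n-2)^2}{2(n-1)}$.

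Next I would prove the \emph{squared} identity
\[
  H^2\,\mathring\alpha(n,H,c)=\frac{n}{4(n-1)}\bigl[D-(n-2)H^2\bigr]^2 .
\]
This step is purely algebraic and needs no sign hypotheses: expanding $[D-(n-2)H^2]^2=D^2-2(n-2)H^2D+(n-2)^2H^4$ and replacing $D^2$ by $n^2H^4+4(n-1)cH^2$, together with $n^2+(n-2)^2=2(n^2-2n+2)$, turns the right-hand side into $ncH^2+\tfrac{n(n^2-2n+2)}{2(n-1)}H^4-\tfrac{n(n-2)}{2(n-1)}H^2D$, which is exactly $H^2$ times the displayed expression for $\mathring\alpha$. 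Then come the two sign checks. From $D^2-(n-2)^2H^4=4(n-1)H^2(H^2+c)\ge 0$ and $H^2+c>0$ I get $D\ge(n-2)H^2$, so the right-hand side of the first cancellation is nonnegative; combined with $H^2\ge 0$ and the squared identity, this forces $\mathring\alpha(n,H,c)\ge 0$, so the radical $\sqrt{\tfrac{n}{n-1}H^2\mathring\alpha}$ is real and $(n-2)\sqrt{\tfrac{n}{n-1}H^2\mathring\alpha}\ge 0$. Squaring the left-hand side of the lemma and invoking the squared identity gives
\[
  (n-2)^2\cdot\frac{n}{n-1}H^2\mathring\alpha
  =\frac{n^2(n-2)^2}{4(n-1)^2}\bigl[D-(n-2)H^2\bigr]^2
  =\bigl(n(H^2+c)-\mathring\alpha\bigr)^2 ,
\]
and since both $(n-2)\sqrt{\tfrac{n}{n-1}H^2\mathring\alpha}$ and $n(H^2+c)-\mathring\alpha$ are nonnegative, equality of their squares yields equality of the quantities themselves.

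Since the algebra is entirely routine, the only real obstacle is the bookkeeping around signs and degenerate cases: one must confirm $D-(n-2)H^2\ge 0$ (which uses $n\ge 2$ together with $H^2+c>0$) \emph{before} extracting the positive root, because a blind square-root would give the identity only up to sign. The cases $n=2$ and $H=0$ are disposed of immediately—in the former both sides vanish because of the factor $(n-2)$, and in the latter $D=0$, $\mathring\alpha=nc$, and both sides reduce to $0$ using the fact that $H^2+c>0$ forces $c>0$.
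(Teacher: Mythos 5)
Your proof is correct and follows essentially the same route as the paper: you compute $n(H^2+c)-\mathring\alpha=\frac{n(n-2)}{2(n-1)}\bigl[D-(n-2)H^2\bigr]$, establish its nonnegativity from $D^2-(n-2)^2H^4=4(n-1)H^2(H^2+c)\geq 0$, and then verify the squared identity before extracting roots of two nonnegative quantities. Your treatment is somewhat more careful than the paper's about the reality of the radical and the degenerate cases $n=2$ and $H=0$, but the underlying algebra is identical.
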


\begin{proof}
  Note that
  \begin{equation} n (H^2 + c) - \mathring{\alpha} (n, H, c) = \frac{n (n - 2)}{2 (n - 1)} \left[
     - (n - 2) H^2 + \sqrt{n^2 H^4 + 4 (n - 1) c H^2} \right] . \label{nH2+c} \end{equation}
  Since
  \[ [n^2 H^4 + 4 (n - 1) c H^2] - [(n - 2) H^2]^2 = 4 (n - 1) H^2 (H^2 + c)
     \geq 0, \]
  we get
  \[ n (H^2 + c) - \mathring{\alpha} (n, H, c) \geq 0. \]
  It follows from \eqref{nH2+c} that
  \begin{eqnarray*}
    &  & [n (H^2 + c) - \mathring{\alpha} (n, H, c)]^2\\
    & = & \frac{n^2 (n - 2)^2}{2 (n - 1)^2} H^2 \left[ 2 (n - 1) c + (n^2 - 2
    n + 2) H^2 - (n - 2) \sqrt{n^2 H^4 + 4 (n - 1) c H^2} \right]\\
    & = & \frac{n (n - 2)^2}{n - 1} H^2  \mathring{\alpha} (n, H, c) .
  \end{eqnarray*}
  This proves Lemma \ref{idealp}.
\end{proof}

\

\section{Proof of the main theorem}

Suppose $M$ is an $n (\geq 4)$-dimensional hypersurface in $\mathbb{F}^{n
+ 1} (c)$ with constant $H$ and constant $S$. Let $\mathring{S}$ denote the squared
length of the traceless second fundamental form, i.e. $\mathring{S} = S - n H^2$.
Let $\mathring{\alpha} (n, H, c)$ be the constant given by {\eqref{alp}}. For ease
of notation, we write $\mathring{\alpha}$ instead of $\mathring{\alpha} (n, H, c)$. We
choose the orthonormal frame $\{ e_i \}$, such that $h_{i j} = \lambda_i
\delta_{i j}$, where the principal curvatures $\lambda_i,1 \leq i \leq n$
satisfy $\lambda_1 \leq \cdots \leq \lambda_n$.
 Assuming $\mathring{S} > 0$, we put $\mu_i =
(\lambda_i - H) \mathring{S}^{- 1 / 2}$. Let $\phi$, $\eta$ and $\sigma$ be
functions of $\mu_i$ defined as {\eqref{fhs}}.\\

\begin{proof*}{Proof of Theorem \ref{Thm2}}
  Without loss of generality, we assume $H > 0$. Letting $\delta = B_n
  \min \left\{ \frac{n}{n - 1} H^2, \mathring{\alpha} \right\}$, we have
  $\mathring{\alpha} \leq \mathring{S} \leq \mathring{\alpha} + \delta$.

  Since $S$ is constant, the formula {\eqref{LapS}} implies
  \begin{eqnarray*}
    0 & = & | \nabla h |^2 + S (n c - S) - n^2 c H^2 + n H \sum_i
    \lambda_i^3\\
    & = & | \nabla h |^2 - \mathring{S}^2 + n \mathring{S} (H^2 + c) + n H \mathring{S}^{3 /
    2} \sum_i \mu_i^3.
  \end{eqnarray*}
  This implies
  \begin{equation}
    | \nabla h |^2 + n H \mathring{S}^{3 / 2} \phi = \mathring{S} \left[ \mathring{S} - n
    (H^2 + c) + (n - 2) \sqrt{\frac{n}{n - 1}  \mathring{S}} H \right] .
    \label{dh2+nHS}
  \end{equation}
  By Lemma \ref{idealp} and the definition of $\delta$, we get
  \begin{eqnarray}
    &  & \mathring{S} - n (H^2 + c) + (n - 2) \sqrt{\frac{n}{n - 1}  \mathring{S}} H
    \nonumber\\
    & \leq & \mathring{\alpha} + \delta - n (H^2 + c) + (n - 2) \sqrt{\frac{n}{n -
    1}} \left( \sqrt{\mathring{\alpha}} + \frac{\delta}{2 \sqrt{\mathring{\alpha}}}
    \right) H \nonumber\\
    & = & \delta + (n - 2) \sqrt{\frac{n}{n - 1}}  \frac{\delta}{2
    \sqrt{\mathring{\alpha}}} H  \label{n/16sqrt}\\
    & \leq & B_n \sqrt{\frac{n}{n - 1} H^2  \mathring{\alpha}} + B_n(n - 2)
    \sqrt{\frac{n}{n - 1}}  \frac{\sqrt{\mathring{\alpha}}}{2} H \nonumber\\
    & \leq & \frac{B_nn}{2} \sqrt{\frac{n}{n - 1}  \mathring{S}} H. \nonumber
  \end{eqnarray}
  From {\eqref{dh2+nHS}} and {\eqref{n/16sqrt}}, we obtain
  \[ \phi \leq \frac{B_n}{2} \sqrt{\frac{n}{n - 1}} . \]
  Using Lemma \ref{phi<1/16sqrt}, we obtain that the smallest principal
  curvature $\lambda_1$ has multiplicity one. Thus the equation
  {\eqref{Laplam}} is valid everywhere on $M$. Inserting $\lambda_i = \mu_i
  \mathring{S}^{\frac{1}{2}} + H$ and $\mu_1 = \sqrt{\tfrac{n - 1}{n}} (\eta - 1)$
  into {\eqref{Laplam}}, we have
  \begin{eqnarray*}
    \Delta \lambda_1 & = & \frac{2}{\sqrt{\mathring{S}}} \sumik  \frac{h_{1 i
    k}^2}{\mu_1 - \mu_k} - n cH\\
    &  & + (n c - S) \left[ \sqrt{\tfrac{n - 1}{n} \mathring{S}} (\eta - 1) + H
    \right] + n H \left[ \sqrt{\tfrac{n - 1}{n} \mathring{S}} (\eta - 1) + H
    \right]^2\\
    & = & \frac{2}{\sqrt{\mathring{S}}} \sumik  \frac{h_{1 i k}^2}{\mu_1 -
    \mu_k}\\
    &  & + \eta \sqrt{\mathring{S}} \left[ \sqrt{\tfrac{n - 1}{n}} (n (H^2 + c) -
    \mathring{S}) + (n - 1) (\eta - 2) H \sqrt{\mathring{S}} \right]\\
    &  & + \sqrt{\tfrac{n - 1}{n} \mathring{S}} \left[ \mathring{S} - n (H^2 + c) + (n
    - 2) \sqrt{\tfrac{n}{n - 1}  \mathring{S}} H \right] .
  \end{eqnarray*}
  This together with {\eqref{dh2+nHS}} yields
  \begin{eqnarray}
    \Delta \lambda_1 & = & \frac{2}{\sqrt{\mathring{S}}} \sumik  \frac{h_{1 i
    k}^2}{\mu_1 - \mu_k} + \sqrt{\frac{n - 1}{n \mathring{S}}} | \nabla h |^2 +
    \sqrt{n (n - 1)} H \mathring{S} \phi \nonumber\\
    &  & + \eta \sqrt{\mathring{S}} \left[ \sqrt{\tfrac{n - 1}{n}} (n (H^2 + c) -
    \mathring{S}) + (n - 1) (\eta - 2) H \sqrt{\mathring{S}} \right] .  \label{Laph11=}
  \end{eqnarray}
  By Lemma \ref{phi<1/16sqrt}, we have
  \begin{eqnarray}
    \sumik  \frac{h_{1 i k}^2}{\mu_1 - \mu_k} & \geq & - \frac{3-3^{-9}}{2}
    \sqrt{\frac{n - 1}{n}} \sumik h_{1 i k}^2 \nonumber\\
    & \geq & - \frac{1-3^{-10}}{2} \sqrt{\frac{n - 1}{n}} \left( 3 \sumik h_{1
    i k}^2 + \sum_{i, j, k \geq 2} h_{i j k}^2 + h_{111}^2 \right) \\
    & = & - \frac{1-3^{-10}}{2} \sqrt{\frac{n - 1}{n}} | \nabla h |^2 . \nonumber
  \end{eqnarray}
  By Lemma \ref{idealp} and the definition of $\delta$, we have
  \begin{eqnarray}
    &  & \sqrt{\tfrac{n - 1}{n}} (n (H^2 + c) - \mathring{S}) + (n - 1) (\eta - 2)
    H \sqrt{\mathring{S}} \nonumber\\
    & \geq & \sqrt{\tfrac{n - 1}{n}} (n (H^2 + c) - \mathring{S}) - 2 (n - 1)
    H \sqrt{\mathring{S}} \nonumber\\
    & \geq & \sqrt{\tfrac{n - 1}{n}} (n (H^2 + c) - \mathring{\alpha} - \delta) - 2
    (n - 1) H \left( \sqrt{\mathring{\alpha}} + \frac{\delta}{2
    \sqrt{\mathring{\alpha}}} \right) \nonumber\\
    & = & - n H \sqrt{\mathring{\alpha}} - \sqrt{\tfrac{n - 1}{n}} \delta - (n -
    1) \frac{H \delta}{\sqrt{\mathring{\alpha}}}  \label{sqrtn-1/n}\\
    & \geq & - n H \sqrt{\mathring{\alpha}} - B_n H
    \sqrt{\mathring{\alpha}} - B_n(n - 1)H \sqrt{\mathring{\alpha}} \nonumber\\
    & \geq & - \frac{6}{5} n H \sqrt{\mathring{S}} . \nonumber
  \end{eqnarray}
  Combining {\eqref{Laph11=}}--{\eqref{sqrtn-1/n}}, we obtain
  \begin{equation}
    \Delta \lambda_1 \geq 3^{-10} \sqrt{\frac{n - 1}{n \mathring{S}}} |
    \nabla h |^2 + \sqrt{n (n - 1)} H \mathring{S} \phi - \frac{6}{5} n \eta H
    \mathring{S} . \label{1/33sqrt}
  \end{equation}
  It follows from Lemma \ref{phi<1/16sqrt} that $3 \eta + 2 \sigma <
  \frac{3}{4}$. Using Lemma \ref{mui} (ii), we get
  \begin{equation}
    \sqrt{n (n - 1)} \phi \geq \eta [3 n - (n + 1) (3 \eta + 2 \sigma)]
    \geq 2 n \eta . \label{1/2sqrt}
  \end{equation}
  By {\eqref{1/33sqrt}} and {\eqref{1/2sqrt}}, we obtain
  \begin{equation}
    \Delta \lambda_1 \geq 3^{-10} \sqrt{\frac{n - 1}{n \mathring{S}}} |
    \nabla h |^2 + \frac{2}{5} \sqrt{n (n - 1)} H \mathring{S} \phi \geq 0.
    \label{sqrtnn-1H}
  \end{equation}
  Since $S$ is constant, the principal curvatures and the sectional curvature
  of $M$ are bounded. Thus we can use the Omori-Yau maximum principle \cite{CY1975,Omori1967,Yau1975}: there
  exists a sequence of points $\{ x_k \} \subset M$, such that $\underset{k
  \rightarrow \infty}{\lim} \Delta \lambda_1 (x_k) \leq 0$. From
  {\eqref{sqrtnn-1H}}, we obtain
  \begin{equation}
    \lim_{k \rightarrow \infty} | \nabla h | (x_k) = 0 \qquad \operatorname{and}
    \qquad \lim_{k \rightarrow \infty} \phi (x_k) = 0. \label{limk->inf}
  \end{equation}
  From {\eqref{dh2+nHS}} and {\eqref{limk->inf}}, we have
  \[ | \nabla h | \equiv \phi \equiv 0 \]
  and
  \[ \mathring{S} - n (H^2 + c) + (n - 2) \sqrt{\frac{n}{n - 1}  \mathring{S}} H = 0. \]
  By Lemma \ref{idealp}, we get
  \[ \mathring{\alpha} - n (H^2 + c) + (n - 2) \sqrt{\frac{n}{n - 1}  \mathring{\alpha}}
     H = 0. \]
  Comparing the above two equations, we obtain $\mathring{S} = \mathring{\alpha}$.

  Since $\phi \equiv 0$, we get $\eta \equiv \sigma \equiv 0$. Therefore, $M$
  is an isoparametric hypersurface with two distinct principal curvatures, and
  the smallest principal curvature has multiplicity one.
\end{proof*}

\noindent Juan-Ru Gu\\
Center of Mathematical Sciences, \\Zhejiang University, \\Hangzhou 310027, China;\\
Department of Applied Mathematics, \\Zhejiang University of Technology, \\Hangzhou
310023, China\\
E-mail address: gujr@zju.edu.cn\\

\noindent Li Lei \\
Center of Mathematical Sciences, \\Zhejiang University, \\Hangzhou 310027, China\\
E-mail address: lei-li@zju.edu.cn\\

\noindent Hong-Wei Xu\\
Center of Mathematical Sciences, \\Zhejiang University, \\Hangzhou 310027, China\\
E-mail address: xuhw@zju.edu.cn\\

\medskip

\end{document}